\documentclass{amsart}
\usepackage{amsmath}
\usepackage{amsmath}
\usepackage[T1]{fontenc}
\usepackage{textcomp} 
\DeclareEncodingSubset{TS1}{hlh}{1} 
\usepackage{amscd,amssymb}
\usepackage{psfrag}
\usepackage{graphicx}
\usepackage[dvipsnames]{xcolor}
\usepackage[all]{xy}
\SelectTips{cm}{}
\allowdisplaybreaks

\numberwithin{equation}{subsection}

\newtheorem{propo}{Proposition}[section]

\newtheorem{theor}[propo]{Theorem}

\theoremstyle{definition}

\theoremstyle{remark}

\let\oldmarginpar\marginpar
\renewcommand\marginpar[1]{\oldmarginpar{\footnotesize #1}}

\newcommand{\Int}{\operatorname{Int}}

\newcommand{\card}{\operatorname{card}}

\newcommand{\sign} {\operatorname {sign}}

\newcommand{\wri}{\operatorname{wri}}
\newcommand{\Wri}{\operatorname{Wri}}
\newcommand{\co}{\operatorname{co}}
\newcommand{\cro}{\operatorname{cro}}
\newcommand{\spun}{\operatorname{spun}}

\newcommand{\lk}{\operatorname{lk}}
\newcommand{\mi}{\operatorname{mir}}

\newcommand{\ridge}{\operatorname{rid}}

\newcommand{\LK}{\operatorname{LK}}

\newcommand{\rsdraw}[3]{\raisebox{-#1\height}{\scalebox{#2}{\includegraphics{#3.eps}}}}

\begin{document}

\title[Knots and links in 2-complexes]{Knots and links in 2-complexes}

    \author[Vladimir Turaev]{Vladimir Turaev}
    \address{
    Vladimir Turaev \newline
    \indent   Department of Mathematics \newline
    \indent  Indiana University \newline
    \indent Bloomington IN47405, USA\newline
    \indent $\mathtt{vturaev@yahoo.com}$}

\begin{abstract}  We introduce and study knots and links in 2-dimensional complexes. In particular, we  define linking numbers for  oriented two-component links in 2-complexes and  a Kauffman-type bracket polynomial for   links in 2-complexes. We also discuss relationships  with  knots and links in 3-manifolds.
\end{abstract}

\maketitle

\section {Introduction} 

Classical knot theory
studies isotopy classes of embedded circles in  3-dimensional Euclidean space $\Bbb  R^3$.  This theory
extends in many directions including  knots in arbitrary 3-manifolds, virtual knots, and high-dimensional knots. 
To present a  knot in $\Bbb  R^3$ one  can draw its generic projection to the plane and indicate at all crossings which of the two branches of the knot lies above the other branch. The resulting plane pictures  are called  knot diagrams. Knot isotopies can be formulated   as sequences of Reidemeister moves which are certain standard  local modifications of  knot diagrams. Instead of the plane $\Bbb R^2$ one can equivalently use  a disc in $\Bbb R^2$ or the 2-sphere  $S^2=\Bbb R^2\cup\{\infty\}$. 

In this paper, we introduce a version of  knot theory where the 2-sphere  is replaced by an arbitrary compact 2-dimensional complex~$X$.  By a knot in~$X$ we shall mean a generic loop  in~$X$ provided with  certain additional data. 
Note that~$X$ is not supposed to be embedded (or even embeddable) in a  3-manifold,  so that we cannot apply here the usual language of over/under-going branches. Our  additional data    is  formulated  in terms of the complements of   loops in~$X$. Similar ideas allow us to define links in~$X$.  
We define isotopy of such links using appropriate versions of the Reidemeister moves. The objective of this theory is to study isotopy classes of knots and links in~$X$. 

We do not meet in this setting any plausible analogues of   standard notions of knot theory such as the knot exterior, the knot group, the Seifert form, the Alexander  polynomial, etc. 
On the other hand, our knots and links in a 2-complex $X$ do share  a number of features of knots and links in 3-manifolds. First, we can draw  usual planar
link diagrams in regions of~$X$ so that classical links determine  links in~$X$.
 Second, when~$X$ is embedded in a 3-manifold, every link in~$X$ determines a link in this 3-manifold.  Third, several fundamental  invariants of classical links  extend to links in  2-complexes.   We will demonstrate it  for  the linking number and  the Kauffman bracket polynomial. In a separate paper we will discuss such extensions for the Khovanov homology.  Also,
we can define   skein modules of  2-complexes analogous to skein modules of 3-manifolds. 

To finish the Introduction, I would like to heartily thank Alexis Virelizier, who did for me all the computer pictures in this paper. I am also thankful to C. Livingston and A. Sikora for helpful remarks on this paper.

\section{Preliminaries on  classical  links}\label{class}  

 We introduce  dotted  diagrams of classical knots and links.

\subsection{Basics}\label{basics} Diagrams of a  link in $\Bbb R^3$ are obtained by projecting the  link to the plane $\Bbb R^2$ and indicating the over/under-passing branches at all crossings. The link should be preliminary deformed into a generic position  to ensure that all crossings in its projection are double and transversal. Looking at the link diagram, we can immediately recover the link up to ambient isotopy in $\Bbb R^3$. The Reidemeister theorem says that two  link diagrams  represent isotopic links in  $\Bbb R^3$ if and only if these  diagrams can be obtained from each other by  a finite sequence of ambient isotopies in the plane,  Reidemeister moves $\Omega_1^\pm, \Omega_2, \Omega_3$ shown in Figure~1, and the inverse moves. 

\begin{figure}[h]
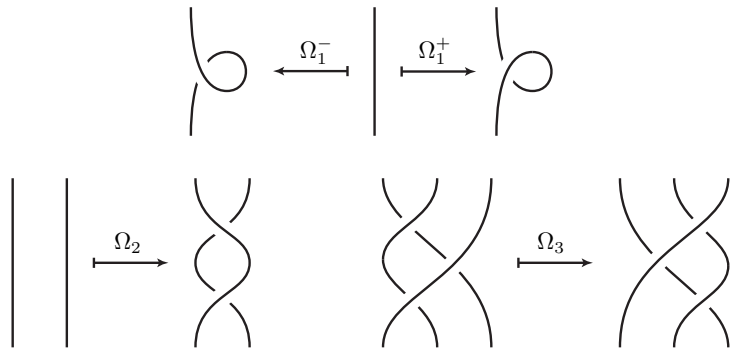

\begin{center}
\psfrag{X}[Bc][Bc]{\scalebox{1}{$\Omega_1^-$}}
\psfrag{A}[Bc][Bc]{\scalebox{1}{$\Omega_1^+$}}
\rsdraw{.45}{.9}{Reide-1}\\[1.5em]
\psfrag{D}[Bc][Bc]{\scalebox{1}{$\Omega_2$}}
\psfrag{T}[Bc][Bc]{\scalebox{1}{$\Omega_3$}}
\rsdraw{.45}{.9}{Reide-2}\hspace{1.6cm} \rsdraw{.45}{.9}{Reide-3}
  \end{center}
\caption{The Reidemeister moves}
\label{fig-Reidemeister}
\end{figure}

\subsection{Dotted diagrams}\label{dottedbasics} Instead of traditional link diagrams  we will use plane  pictures which we call \emph{dotted  link diagrams}. Observe  that near every crossing, the (generic) link projection in the plane looks like two transversal lines  splitting the plane into four areas surrounding the crossing. Pick the overpassing line and rotate it counterclockwise sweeping two of these areas. Put a dot in each of those two areas, see Figure~2. These dots allow us to  recover the overpassing line: moving the dots clockwise around the crossing we hit the overpassing line before hitting  the underpassing line. In this way, the  language of link diagrams with over/under-passes
 can be translated in the language of dotted  diagrams and vice versa. 
 For example, the dotted  diagrams in Figure~3 represent a left-handed trefoil and a right-handed trefoil.

\begin{figure}[h]
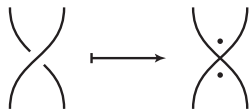

\begin{center}
\rsdraw{.45}{.9}{crossing-to-dots}
  \end{center}
\caption{From overcrossings to dots}
\label{fig-crossing-to-dots}
\end{figure}

\begin{figure}[h]
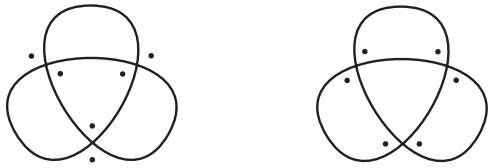

\begin{center}
\rsdraw{.45}{.9}{left-trefoil} \hspace{1.6cm} \rsdraw{.45}{.9}{right-trefoil}
  \end{center}
\caption{A left-handed trefoil and a right-handed trefoil}
\label{fig-trefoils}
\end{figure}

  Under the passage to  the dotted diagrams, the 
 Reidemeister moves $\Omega_1^\pm, \Omega_2, \Omega_3$  turn respectively into the  moves $M_1^\pm, M_2, M_3$ in Figure~4. The Reidemeister theorem  implies that two dotted  link diagrams  in the plane represent isotopic links in  $\Bbb R^3$ if and only if these  diagrams can be related by  a finite sequence of ambient isotopies in   the plane, the  moves $M_1^\pm, M_2, M_3$, and the inverse moves. 

\begin{figure}[h]
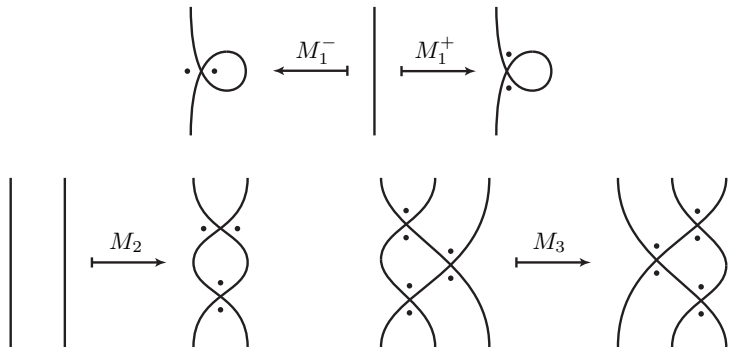

\begin{center}
\psfrag{X}[Bc][Bc]{\scalebox{1}{$M_1^-$}}
\psfrag{A}[Bc][Bc]{\scalebox{1}{$M_1^+$}}
\rsdraw{.45}{.9}{Reide-1-dots}\\[1.5em]
\psfrag{D}[Bc][Bc]{\scalebox{1}{$M_2$}}
\psfrag{T}[Bc][Bc]{\scalebox{1}{$M_3$}}
\rsdraw{.45}{.9}{Reide-2-dots}\hspace{1.6cm} \rsdraw{.45}{.9}{Reide-3-dots}\\
  \end{center}
\caption{The dotted Reidemeister moves}
\label{fig-Reidemeister}
\end{figure}

  The standard procedure of taking the mirror image of a link goes by replacing overpass by underpass at each crossing. In the language of dotted diagrams this amounts to replacing at every crossing  the given pair of dots  with  the \emph{opposite pair} of dots lying in the other two areas  adjacent to the crossing. 
  This yields a dotted diagram of the mirror image of the link.
 For example,   the standard dotted  diagram of the left-handed trefoil  yields in this  way the dotted diagram of  the right-handed trefoil, see Figure~3. It  is curious to note that one of these diagrams has two regions containing 3 dots while   the other diagram has three regions  containing 2 dots.

 \subsection{Remarks}\label{remqsbasics}  1.  Any dotted link diagram~$D$  in $\Bbb R^2$ gives rise to another dotted link diagram $D'$   in $\Bbb R^2$  by reflecting~$D$ (together with the dots) with respect to a straight line  $\ell\subset \Bbb R^2$.   The  links  $L, L'\subset \Bbb R^3$ presented  by $D, D'$ are isotopic: rotating~$L$     around the  line  $\ell\subset\Bbb R^2\subset \Bbb R^3$ to $180^\circ$ we obtain~$L'$.
 As a consequence,  the diagrams $D,D'$ can be related by  a finite sequence of ambient isotopies in  $\Bbb R^2$, the  moves $M_1^\pm, M_2, M_3$, and the inverse moves. 
 
 2. Dotted diagrams and the moves in  Figure~4 can be  also used to represent braids and 
tangles. For example, the dotted diagram in Figure~2 represents the elementary
braid $\sigma_1$ on two strings. The mirror image of this  diagram represents the inverse braid 
 $\sigma_1^{-1}$. The equality  $\sigma_1 \sigma_1^{-1}=1$ is obtained  by the move  $M_2$. The braid identity $\sigma_1\sigma_2\sigma_1 =\sigma_2\sigma_1\sigma_2$ is obtained  by the move  $M_3$.

  \section{Graphs,  2-complexes, and curves}\label{coordinate2}\label{AMdddfT0}  
   
  We fix here our terminology concerning  graphs and curves in 2-complexes.
  
\subsection{Graphs}\label{coordinateddd2w} A \emph{finite graph} is formed by a finite  set of vertices and a finite  set of connecting them edges.  Different edges  may have the same endpoints. We allow  edges  to be loops, i.e., to have both ends in the same vertex.  The \emph{degree} of a vertex is the number of the adjacent edges (the loops  are counted twice). By abuse of the language,  the underlying topological spaces of finite graphs are also called finite graphs.

The \emph{cylinder} over a graph~$\Gamma$ is the topological space $\Gamma\times [0,1]$. The \emph{cone} over~$\Gamma$ is the    topological space $C(\Gamma)$  obtained from the cylinder $\Gamma\times [0,1]$  by contracting $\Gamma \times \{0\}$ to a point.  This point of $C(\Gamma)$  is called the  \emph{cone point}.

The graph  formed by  two vertices and $n\geq 1$ connecting them edges (no loops) will be denoted  $\theta_n$. In particular, $\theta_1$ is a segment, $\theta_2$ is a circle, and $\theta_3$ is a circle with  a diameter.


\subsection{Two-complexes}\label{coordinatedddb2w}  By a \emph{2-complex} we will mean a compact  Hausdorff topological space~$X$ such that each point $x\in X$ has a closed neighborhood  homeomorphic to the  cone $C(\Gamma_x)$ over a  finite graph $\Gamma_x$.  It is understood here that the homeomorphism in question must carry~$x$ to the cone point of  $C(\Gamma_x)$. We distinguish four types of points $x\in X$. If   $\Gamma_x$ is homeomorphic to the circle, then~$x$ is   a \emph{generic point} of~$X$. If   $\Gamma_x$ is homeomorphic to the segment, then~$x$ is   a \emph{boundary point} of~$X$.
If $\Gamma_x$  is homeomorphic  to the graph $\theta_n$ with $n\geq 3$ then $x$ is  a \emph{ridge point of~$X$}.      In all other cases,  $x$ is  a \emph{singular point} of~$X$. 


The generic points of a 2-complex~$X$  form a surface  which we
  call   the \emph{interior} of~$X$ and denote $\Int(X)$. 
The boundary  points of~$X$  form a 1-dimensional manifold denoted $\partial X$. Clearly, the union  $\Int(X)\cup \partial X$ is a surface with boundary~$\partial X$.  The ridge points of~$X$ form a 1-dimensional 
 manifold  called the    \emph{ridge of~$X$} and denoted $\ridge (X)$. Near any ridge point~$x$, the space~$X$ is homeomorphic to the union
  of $\geq 3$ half-planes in $\Bbb R^3$ having the same boundary line. The images of these half-planes in~$X$ are the \emph{branches  of~$X$ at~$x$} or  the \emph{branches  of~$X$ adjacent to~$x$}.

\subsection{Closed curves}\label{sgg21} A  \emph{closed curve} in a 2-complex~$X$ is the image of a continuous map $S^1\to X$.  A   closed curve $\ell   \subset X$ is \emph{generic} if 
 
 - (a) $\ell\subset \Int(X) \cup \ridge(X)$;
 
 - (b) the crossings of~$\ell$  with itself are double transversal crossings in $\Int(X)$ and are finite in number;
 
 - (c)  if~$\ell$  contains a  ridge point~$x$, then near~$x$ this curve  is an    embedded arc which lies in the union of two different  branches of~$X$  at~$x$ and which meets the 1-manifold   $\ridge (X)$  transversally at~$x$.

 A  finite family  of closed curves in~$X$ is \emph{generic} if 
 these curves  are generic and 
 their crossings are double transversal crossings in $\Int(X)$ and are finite in number.

\subsection{Examples}\label{codb2w} 

1. A compact surface is a 2-complex having no ridge points and no singular points.

2. The  underlying topological space of a finite 2-dimensional simplicial complex is a 2-complex.

3. For a  finite   graph $\Gamma$,  the cylinder $\Gamma\times [0,1]$ and the cone $C(\Gamma)$ are 2-complexes.
 If $\Gamma$ has no isolated vertices then $\Gamma\times [0,1]$ has  $2N$ singular points where~$N$ is the number of vertices of~$\Gamma$ of degree $\geq 3$. For $N\geq 1 $,   the cone $C(\Gamma)$ has $N+1$ singular points.


  \section{Knots and links in 2-complexes}\label{s2}  
   
For the rest of the paper, we fix  a  connected 2-complex~$X$.

\subsection{Knots and links in~$X$}\label{s21} 
 A \emph{link}  in the 2-complex~$X$ is a  finite generic family of 
closed curves in~$X$ which is \emph{dotted} in the sense that every  crossing of these curves  is provided with two dots   in the opposite adjacent areas in $\Int(X)$.
 Each closed curve in the link  together with the dots at its self-crossings  is  a \emph{component} of this link.  A link  having only one component is  a \emph{knot}. 
 
 The \emph{mirror image} $L^{\mi}$ of a link~$L$ in~$X$ is  obtained by replacing  at every crossing of~$L$ the given pair of dots with the \emph{opposite pair} of dots lying in the other two adjacent areas at this crossing.

  
\subsection{Isotopies}\label{s22}  Besides the dotted Reidemeister moves $M^{\pm}_1, M_2$, $M_3$  in  $\Int(X)$ we need  moves which  
 involve the  ridge   and the singular points of~$X$.   In  Figures~5 and~6 the horizontal line represents the ridge of~$X$ (not a part of the link). 
 The  moves $M_4$ and $ M_5^{\pm}$ shown  in these figures push the link   across the ridge. 
  These  moves modify  the link   in the union of two adjacent branches.

\begin{figure}[h]
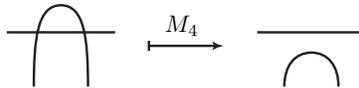

\begin{center}
\psfrag{T}[Bc][Bc]{\scalebox{1}{$M_4$}}
\rsdraw{.45}{.9}{move-M4}
\end{center}
\caption{The move $M_4$}
\label{fig-move-M4}
\end{figure}

\begin{figure}[h]
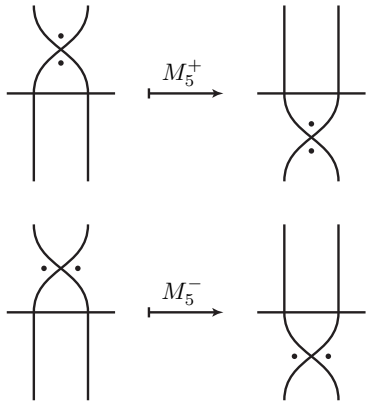

\begin{center}
\psfrag{A}[Bc][Bc]{\scalebox{1}{$M_5^+$}}
\rsdraw{.45}{.9}{move-M5a}\\[1.5em]
\psfrag{B}[Bc][Bc]{\scalebox{1}{$M_5^-$}}
\rsdraw{.45}{.9}{move-M5b}
\end{center}
\caption{The moves $M_5^+$ et $M_5^-$}
\label{fig-moves-M5}
\end{figure}

  The move $M_6$  modifies  the   link    in the union of three adjacent branches, see Figure~7.

\begin{figure}[h]
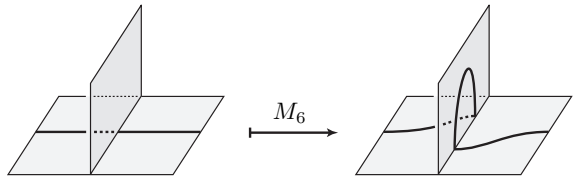

\begin{center}
\psfrag{T}[Bc][Bc]{\scalebox{1}{$M_6$}}
\rsdraw{.45}{.9}{move-M6}
\end{center}
\caption{The move $M_6$}
\label{fig-move-M6}
\end{figure}

 Finally, the move $M_7$ pushes the link across a singular point $x\in X$ as follows. Let 
$ C(\Gamma_x)\subset X$ be a cone neighborhood  of~$x$ and let $S\subset \Gamma_x$ be a circle embedded in the graph~$\Gamma_x$. Then the cone $D=C(S)\subset C(\Gamma_x)$ is a disc  centered at~$x$. If a  link  in~$X$ meets~$ D$    along an embedded arc with endpoints in $\partial D=S$, then the move $M_7$  replaces this arc with any other embedded arc in $ D\setminus \{x\}$  which  has the same endpoints and is transversal to $\ridge (X) \cap D$.  Note that $\ridge (X) \cap D$ is a union of several
radii of the disc~$D$.

Two links in~$X$ are \emph{isotopic} if
they can be transformed into each other  using  ambient isotopies in~$X$, the moves $M^{\pm}_1, M_2 - M_4, M_5^{\pm}, M_6, M_7$,
 and the inverse moves. We will call these moves \emph{basic moves}. Note that if two  links are isotopic then so are their mirror images. 
 
 The  theory of links in~$X$
 studies  isotopy classes of  links in~$X$.

   \subsection{Local links}\label{s2cv3}  Classical links in $\Bbb R^3$ can be drawn in~$X$ as follows. 
   Pick an embedding $f:\Bbb R^2\hookrightarrow \Int(X)$. Given a link  $L\subset  \Bbb R^3$,   present it by  a dotted diagram in $\Bbb R^2$ and transport this diagram (together with the dots) along~$f$. This yields  a link,   $L_f$, in~$X$ called the \emph{local link modeled on~$L$}.  It is clear  that the isotopy class of $L_f$ depends only on~$L$ and the isotopy class of~$f$.  Moreover, when~$f$ is composed with a reflection of the plane, the isotopy class of $L_f$ is preserved, as  follows from
 Remark~\ref{remqsbasics}.1.  Consequently, the isotopy class of $L_f$   depends only on~$L$ and the connected component of the surface $\Int(X)$ containing the image of~$f$.
 We will say that the link $L_f$ is obtained by drawing~$L$  in that component.
 
 We briefly discuss the role of the   component of $\Int (X)$  above. We say that two  components of  $\Int(X)$ are \emph{adjacent} if they both are adjacent to a certain ridge point of~$X$. Drawing the same  classical link diagram in adjacent components of $\Int (X)$ we  get   isotopic  links in~$X$: the isotopy is obtained by pushing the diagram across the ridge via a sequence of moves $M_4, M_5^{\pm}$ and their inverses. Hence, if any two components of  $\Int(X)$ can be included in a sequence of  components of  $\Int(X)$ in which every two consecutive components are adjacent, then  all links in~$X$ obtained by drawing the same classical link  are isotopic. This property  holds for example when~$X$ is the 2-skeleton of a triangulation of a compact connected 3-manifold.

  \subsection{A relation to links in 3-manifolds}\label{s2dddcv3} Suppose that  our 2-complex~$X$ is embedded into an oriented 3-manifold~$N$. (Note for the record that not all 2-complexes embed in 3-manifolds). We show how to transform any link~$L$ in~$X$ into a link in~$N$. Of course,  the set $L\subset X \subset N$   is a 4-valent graph whose vertices are the crossings of~$L$. 
  We  modify~$L$ near a crossing~$c$  as follows. Recall that $c\in \Int (X)$. Pick a small disk $D\subset \Int (X)$ centered at~$c$ and such that $L\cap D$ is the union of two   diameters of~$D$. These diameters  split~$D$ into four sectors and, by the definition of~$L$, two opposite sectors carry dots. Pick any  orientation~$\omega$ of~$D$. Among the two diameters forming $L\cap D$ we let $d_\omega$ be the one  such that rotating  $d_\omega$ around the center~$c$ of~$D$ in the direction determined by~$\omega$ we first hit the dotted sectors and then the undotted sectors.  Pick a normal vector field $\nu_\omega$ on~$D$ in~$N$ such that the pair $(\omega, \nu_\omega)$ determines the given orientation of~$N$.  Now, we slightly push  $d_\omega\subset L$  along $\nu_\omega$. This separates two branches of~$L$ at~$c$. Moreover, the result of this procedure considered up to isotopy in~$N$  does not depend on~$\omega$. Indeed, under the opposite choice $-\omega$  of the orientation  in~$D$, we obtain the other diameter $d_{-\omega} \neq d_\omega$ and the opposite normal vector field $\nu_{-\omega}=-\nu_\omega$.  Pushing   $d_{-\omega}$  along $\nu_{-\omega}$,
we separate the branches of~$L$ at~$c$ in the same way as above. 

Applying the described procedure at all crossings of~$L$ we transform~$L$ into a link in~$N$.
Its isotopy class  in~$N$ is preserved under the basic moves on~$L$. This is obvious for the moves $ M_4 , M_5^{\pm}, M_6,  M_7$ and standard for  $M^{\pm}_1, M_2, M_3$. (To see it, we can use equivalence between dotted link diagrams in the plane
and  traditional link diagrams). In this way, each isotopy class of links in  $X\subset N$ determines an isotopy class of links in~$N$.

Suppose now that  all  components of $N  \setminus X $ are homeomorphic to a 3-ball. (This is the case, for example, when~$X$ is 
 the 2-skeleton   of a triangulation of~$N$ or  the 2-skeleton   of a CW-decomposition of~$N$ dual to a triangulation of~$N$). Let $\mathcal F$ be the above-defined  mapping from the set of  isotopy classes of links in~$X$  to the set of isotopy classes of links in~$N$. The  mapping~$\mathcal F$ is surjective as any link in~$N$ can be deformed into a position near~$X$ and then drawn on $X$.  It seems plausible that the mapping~$\mathcal F$ is  bijective. Indeed, any link isotopy in~$N$ can be pushed away from the centers of the 3-balls forming $N\setminus X$ to get an isotopy  proceeding near~$X$. The latter link isotopy may be \lq\lq drawn on~$X$\rq\rq, i.e.,  expanded as a composition of ambient isotopies in~$X$
and the basic moves on links in~$X$. Of course, this argument should be carefully verified as the bijectivity of~$\mathcal F$ would have serious consequences. If true, it  would allow  to use isotopy invariants of links in~$X$ (including  those defined in the next sections) to obtain new isotopy invariants of links in~$N$. 
Those invariants would certainly depend on the choice of~$X$. For example, given
  a link $L\subset N$, we can take for~$X$   the 2-skeleton of a triangulation of~$N $ which is so small   that~$L$ is a union of disjoint   closed curves  embedded in~$X$.  Then all invariants of $L\subset X$ defined below are equal to zero. This is in general not the case, for instance, for $X=S^2\subset N=S^3$.


 \subsection{Remark}\label{s2dddc222v3}    As in Section~\ref{dottedbasics},   any traditional link diagram in  a disc $D\subset \Bbb R^2$. transforms  in  a dotted  link  diagram in~$D$. This yields  an equivalence between the theory of links in the 3-manifold $D\times \Bbb R $ and the theory of  links in~$D$. 
 More generally,  for any compact oriented surface~$\Sigma$, the theory of links in the 3-manifold $\Sigma \times \Bbb R$ is equivalent to the theory of links in~$\Sigma$.

  \section{The   linking number and the writhe}\label{s2+}  
 
 We introduce two  numerical characteristics of  links in the  2-complex~$X$, the linking number and the writhe.

  \subsection{Oriented links}\label{s23}  \emph{Oriented links}
in~$X$ are  links in~$X$ with oriented (directed) components.
  \emph{Oriented (basic) moves} $M^{\pm}_1, M_2 - M_4, M_5^{\pm}, M_6, M_7$ on oriented links are the basic moves as  above 
 keeping orientations of all components.  Two oriented  links in~$X$ are \emph{isotopic} if
one can be transformed into the other   using these moves, their inverses, and ambient isotopies in~$X$.     For any oriented link~$L$ in~$X$ we denote by $-L$ the same link with opposite orientation of the components. 
 
 Note that each  Reidemeister move on classical  link diagrams  has several oriented versions. The minimal  set of oriented Reidemeister moves  was described by M. Polyak \cite{Po}. His moves - translated in the language of dotted diagrams -  together with the oriented moves $ M_4, M_5^{\pm}, M_6, M_7$ are sufficient to study isotopies of oriented links in 2-complexes.

   \subsection{The linking number}\label{s2+1}  The linking number  of an   oriented  2-component link in $\Bbb R^3$ counts the number of times that one component winds around the other. This  number can be computed from a  diagram of the link as  a sum of the crossing signs. The signs of the  crossings  can be defined for  oriented links in~$X$:  to a crossing~$c$   we attribute a sign $\sign(c)=\pm 1$ as  in Figure~8. If~$L$ is a 2-component oriented link in~$X$ then we define its linking number  by   \begin{equation}\label{lnre}   
\lk (L)= \sum_c \sign(c) \in \Bbb Z\end{equation} 
  where~$c$ runs over all crossings of the components   of~$L$ with each other. It is easy  to check that    $\lk (L)$  is preserved under
  the oriented basic moves  on~$L$. Therefore  $\lk (L)$ is an  isotopy invariant of~$L$. 

\begin{figure}[h]
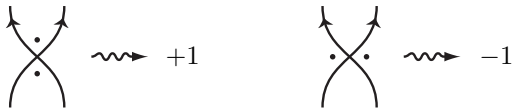

\begin{center}
\psfrag{A}[Bl][Bl]{\scalebox{1.111}{$+1$}}
\psfrag{B}[Bl][Bl]{\scalebox{1.111}{$-1$}}
\rsdraw{.45}{.9}{sign1}  \hspace{1.6cm}  \rsdraw{.45}{.9}{sign2}
\end{center}
\caption{The sign of a crossing}
\label{fig-sign}
\end{figure}

 We list a few  properties of  $\lk (L)$. First, $\lk (L)$ is congruent mod 2 to the number of crossings of different components   of~$L$.  Second,   $\lk (-L)=\lk (L)$ and  $\lk (L^{\mi})=-\lk (L)$.  Third, if~$L$ is a local link in~$X$ derived from an oriented 2-component link $\ell=\ell_1 \cup \ell_2 \subset \Bbb R^3$, then    \begin{equation}\label{lnrm}    \lk(L)=\lk(\ell_1, \ell_2)+\lk(\ell_2, \ell_1) =2\lk(\ell_1, \ell_2)=2\lk(\ell_2, \ell_1)  \end{equation}  where $\lk(\ell_1, \ell_2)$ is the usual linking number of the oriented knots $\ell_1, \ell_2$ in $\Bbb R^3$. Indeed, present~$\ell$ by a  traditional planar link diagram and take the associated dotted  diagram. The sum   \eqref{lnre} runs over all crossings of $\ell_1$ with $\ell_2$. Those  crossings where $\ell_1$ lies above $\ell_2$ contribute $\lk(\ell_1, \ell_2)$  and those  crossings where $\ell_2$ lies above $\ell_1$ contribute $\lk(\ell_2, \ell_1)$. This gives \eqref{lnrm}. As a consequence, if $\lk(L)$ is odd then~$L$ is not isotopic to a local link.
  
    \subsection{The writhe}\label{s2+1v}  It is clear from the definitions that the opposite directions on a knot~$K$ in~$X$ give rise to the same signs at all crossings  of~$K$. We define the \emph{writhe $\wri(K)\in \Bbb Z$} of~$K$  to be the sum of the signs of all  crossings of~$K$.  The writhe increases by 1 under the move $M_1^+$,  decreases by 1 under the move $M_1^-$, and  is preserved under
  all  other basic moves. The writhe is \emph{not} an isotopy invariant of knots. 
  
  For any  link~$L$ in~$X$ we define the \emph{unoriented  writhe} $\wri(L)\in \Bbb Z$ as the sum of the writhes of the components of~$L$. 
For an oriented link $L$ in~$X$ we define the \emph{oriented writhe} by 
  \begin{equation}\label{lnr}   
\Wri (L)= \sum_c \sign(c)  \in \Bbb Z \end{equation} 
  where~$c$ runs over all crossings  of~$L$. Clearly, $\Wri (-L)=\Wri (L)$ and  $\Wri (L^{\mi})=-\Wri (L)$. If $L_1,..., L_n$ are the components of~$L$, then 
  $$\Wri(L)=\wri(L)+\sum_{1\leq i<j\leq n} \lk(L_i,L_j).$$
  Both  $\wri(L)$ and $\Wri(L)$   increase by 1 under the move $M_1^+$ on~$L$, decrease by 1 under the move $M_1^-$, and are preserved under
  all  other basic moves. Both these writhes are \emph{not} isotopy invariant.


    \subsection{Examples}\label{s2+22}  1. Pick a point $p\in  S^1$ and consider the 2-component link  in the torus $S^1\times S^1$
    formed by the curves $S^1\times \{p\}, \{p\}\times S^1$, and a choice of dots in their crossing  $(p, p)$.
    For an appropriate  orientation of these curves, we get an oriented link in  the torus with linking number $1$. Inverting  orientation of one of the curves we get an oriented link with linking number $-1$. 
    
    
    2. Let $\mathcal M$ be the M\"{o}bius band represented in Figure~9 by a square whose vertical sides are glued to each other via a homeomorphism carrying the points $A, B$ respectively to $A', B'$.  Let $L$ be the  oriented  2-component  link in~$\mathcal M$ whose components are parametrized by the skew segments  in Figure~9. Then $\lk(L)=1$ and $\lk (L^{\mi})=-1$. As a corollary, the links~$L$ and $L^{\mi}$ are not isotopic.

\begin{figure}[h]
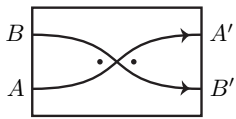

\begin{center}
\psfrag{A}[Br][Br]{\scalebox{1}{$B$}}
\psfrag{K}[Br][Br]{\scalebox{1}{$A$}}
\psfrag{B}[Bl][Bl]{\scalebox{1}{$A'$}}
\psfrag{D}[Bl][Bl]{\scalebox{1}{$B'$}}
\rsdraw{.45}{.9}{moebius}
\end{center}
\caption{A link in a M\"{o}bius band}
\label{fig-moebius}
\end{figure}

3. Let $\mathcal A$ be  the annulus represented in Figure~10 by a square whose vertical sides are glued to each other via a homeomorphism carrying the points $A, B$ respectively to $A', B'$.  Let $L$ be the  oriented 2-component  link in $\mathcal A$ whose components are parametrized by the curves  in Figure~10.  Then $\lk(L)=2$ and $\lk (L^{\mi})=-2$. Thus, the links~$L$, $L^{\mi}$  are not isotopic.

\begin{figure}[h]
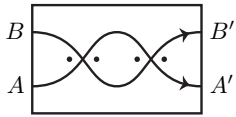

\begin{center}
\psfrag{A}[Br][Br]{\scalebox{1}{$B$}}
\psfrag{K}[Br][Br]{\scalebox{1}{$A$}}
\psfrag{B}[Bl][Bl]{\scalebox{1}{$B'$}}
\psfrag{D}[Bl][Bl]{\scalebox{1}{$A'$}}
\rsdraw{.45}{.9}{annulus}
\end{center}
\caption{A link in an annulus}
\label{fig-moebius}
\end{figure}

4.  Consider the 2-complex $\theta_3\times I$ where $I=[0,1]$ and $\theta_3$ is the graph with two vertices $a,b$ and three connecting them edges $e_0, e_1, e_2$. For any integer $n\geq 0$, we define an oriented 2-component  link $L_n$  in   $\theta_3\times I$. One component of $L_n$ is formed by  an arc leading  from $(a,1/3)$ to $(b,1/3)$ in the square $e_0\times I$ and an arc  going back from  $(b,1/3)$ to $(a,1/3)$   in  the square  $e_1\times I$.   Another component of~$L_n$ is formed by an arc leading from $(a,2/3)$ to $(b,2/3)$ in   $e_0\times I$ and an arc going back   from  $(b,2/3)$ to $(a,2/3)$  in    $e_2\times I$.       We arrange our arcs in $e_0\times I$ so that they meet  transversally   in $2n$ points. We provide these $2n$ crossings with dots  so that  they all have the sign $ +1$. This  defines a 2-component oriented link~$L_n$ in $\theta_3\times I$ with $\lk(L_n)=2n$. Of course, $\lk (L_n^{\mi})=-2n$. As a corollary, the links $\{L_n, L_n^{\mi}\}_{n\geq 1}$ 
 are not isotopic to each other and to $L_0=L_0^{\mi}$.

  \section{Futher invariants of links}\label{s2h+}  
 
 We discuss
   analogues of  linking numbers which take into account  homotopy classes of loops. In this section,~$\Pi$ is  the set of free homotopy classes of loops  in the 2-complex~$X$ (we identify~$\Pi$  with the set of conjugacy classes in  $\pi_1(X)$).  We let $1\in \Pi$ be the  homotopy class of  contractible loops and let $\Bbb Z \Pi$ be the free abelian groups with basis~$\Pi$. 

   \subsection{The   linking class}\label{s2gfg} We define  isotopy 
   invariants of oriented links with values in  $\Bbb Z \Pi$. One  obvious invariant  is the  formal sum of the  free homotopy classes of  link components. This sum, however,  is of little interest to us as it does not depend on the knotting/linking data (the dots). We  define  a subtler  invariant  of 2-component links refining the linking number. Its definition uses
  smoothings of the crossings of oriented curves as in  Figure 11.  (These smoothings do not depend on dots). 

\begin{figure}[h]
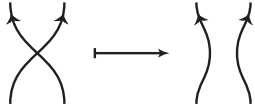

\begin{center}
\rsdraw{.45}{.9}{smooth-crossing}
\end{center}
\caption{The smoothing of a crossing}
\label{fig-smoothing-crossing}
\end{figure}

   Let~$L$ be an oriented 2-component link in~$X$.  The smoothing of~$L$ at a crossing~$c$ of different components of~$L$,    yields a loop in~$X$. Let $\langle c \rangle\in \Pi$  be its free homotopy class. Set
  \begin{equation}\label{lnr+}   
\LK (L)= \sum_c \sign(c)\langle c \rangle \in \Bbb Z \Pi\end{equation} 
  where~$c$ runs over all crossings  of the components of~$L$ with each other. It is easy  to check that $\LK (L)$ is preserved under
  the oriented basic moves  on~$L$, and therefore is an isotopy  invariant of~$L$. We call it the \emph{linking class} of~$L$. Clearly,  the additive map $\Bbb Z \Pi\to \Bbb Z, \Pi\mapsto 1$ carries  $\LK(L)$ into $\lk(L)$. Also $\LK (L^{\mi})=-\LK (L)$ and  $\LK(-L)=\overline {\LK(L)}$ where the overline denotes the involution in $\Bbb Z \Pi$ induced by the inversion in $\pi_1 (X)$. The reader 
can easily compute the  linking classes of the links  in Section~\ref{s2+22}.


  \subsection{The   colinking class}\label{s2nmgfg} In analogy with the linking class, we introduce  an  isotopy 
   invariant of oriented knots in~$X$ with values in the abelian group  $\Bbb Z \Pi \otimes_{\Bbb Z} \Bbb Z \Pi$. 
Namely, consider an  oriented knot~$K$ in~$X$. Let $k\in \Pi$ be the free homotopy class of the underlying loop of~$K$. 
 The smoothing of~$K$ at any crossing~$c$  yields an oriented 2-component link  in~$X$. Let $k_1^c, k_2^c\in \Pi$ be the free homotopy classes of the components of this link enumerated in an arbitrary order.
Set
  \begin{equation}\label{lnr+co}   
\co (K)= \sum_c \sign(c) (k_1^c\otimes k_2^c + k_2^c\otimes k_1^c - k\otimes 1-1\otimes k)   \in  \Bbb Z \Pi \otimes_{\Bbb Z} \Bbb Z \Pi\end{equation}  
where~$c$ runs over all crossings  of~$K$. 
Equivalently,
$$
\co (K)=\sum_c \sign(c) (k_1^c\otimes k_2^c + k_2^c\otimes k_1^c ) -  \wri (K) (k\otimes 1+1\otimes k)  .$$ It is easy  to check that $\co (K)$ is preserved under
  all oriented basic moves on~$K$, and therefore is an isotopy  invariant of~$K$. We call  $\co (K)$ the \emph{colinking class} of~$K$. Clearly, $\co(K^{\mi})=-\co (K)$ and $\co(-K)=\overline{\co(K)}$ where the overline stands for the tensor square of the involution   in $\Bbb Z \Pi $ induced by the inversion in $\pi_1 (X)$.
  Also, for any local knot~$K$, we have $\co(K)=0$.
  
    \subsection{Example}\label{s2nmgexeg} We compute the colinking class for a sequence of oriented knots $\{K_n\}_{n\geq 0}$  in the 2-complex $\theta_3\times I$. We use the symbols $a,b, e_0, e_1, e_2$    as in Example~\ref{s2+22}.4. For $n\geq 0$,  the knot~$K_n$ is formed by  four oriented embedded arcs: an arc  from $(a,1/3)$ to $(b,2/3)$ in $e_0\times I$, an arc  from  $(b,2/3)$ to $(a,2/3)$  in  $e_1\times I$, an arc  from  $(a,2/3)$ to $(b,1/3)$  in $e_0\times I$, and finally an arc  from  $(b,1/3)$ to $(a,1/3)$ in  $e_2\times I$.
    We arrange the two arcs in $e_0\times I$ so that they cross  each other transversally  in $2n+1$ points. We provide these crossings with dots  so that all the crossings have sign  $ +1$. This  defines an oriented knot~$K_n$ in $\theta_3\times I$. To calculate $\co(K_n)$, note  that the group $\pi_1(\theta_3\times I)=
    \pi_1(\theta_3)$ is a free group on two generators $u,v$ represented respectively   by the loops $e_0 e_1^{-1}$ and $e_0 e_2^{-1}$ where $e_0, e_1,e_2$ are directed from~$a$ to~$b$. It follows from the definitions that $$ \co (K_n)= (2n+1) (u \otimes v +v \otimes u - k_n\otimes 1- 1\otimes k_n )$$
    where  $k_n\in \Pi$ is the free homotopy class of the underlying loop of~$K_n$. These computations imply that the knot $K_n$ is not isotopic to a knot with less than $2n+1$ crossings. In particular, the knots $K_n, K_m$ are not isotopic for $n\neq m$.

  \section{The bracket polynomial}\label{sbbb2h+}  
 
We extend Louis Kauffman's definition of the bracket polynomial of classical links  to  links in  2-complexes.

   \subsection{Definition}\label{s2nnnnmsgfg} For a link~$L$ in the 2-complex~$X$ we denote by $\# L$  the set of  crossings of~$L$ and let $\cro(L)=\card(\#L)$ be the number of crossings of~$L$. Given a set $C\subset \# L$, we smooth~$L$ at
all crossing  as follows: at the   crossings belonging to~$C$ we proceed as in the left part of Figure~12; at the   crossings not belonging to~$C$ we proceed as in the right part of Figure~12. This turns $L$ into a system of  disjoint simple closed curves in~$X$. Let $\vert L, C\vert \geq 1$ be the number of these curves. Set 
\begin{equation}\label{idyvgm3n+} \langle L, C \rangle=   (-A^2-A^{-2})^{\vert L, C \vert -1} A^{2\,{ \card}\, (C)-   \cro (L)} \in \Bbb Z[A, A^{-1}].
\end{equation}
The \emph{bracket polynomial} $\langle L \rangle$ of~$L$ is the sum 
\begin{equation}\label{idyvg3n+} \langle L\rangle=  \sum_{C\subset \#L}  \langle L, C \rangle \in \Bbb Z[A, A^{-1}] \end{equation}
where summation  runs over all subsets~$C$ of $ \#L$.
It is easy  to check that the Laurent polynomial $\langle L \rangle$ is preserved under
  the   (unoriented) basic moves $M_2 - M_4, M_5^{\pm}, M_6, M_7$  on~$L$. The only moves  requiring  a little work  are the  moves 
  $M_2 ,M_3$ in a disc in $\Int(X)$;  to handle them we can switch inside this disc to the  language of over/under-passes and use the classical Kauffman's arguments.

\begin{figure}[h]
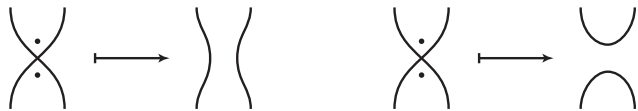

\begin{center}
\rsdraw{.45}{.9}{smooth1}  \hspace{1.6cm}  \rsdraw{.45}{.9}{smooth2}
\end{center}
\caption{Smoothings}
\label{fig-smoothings}
\end{figure}

Under
  the  moves $ M^{\pm}_1$  the bracket polynomial   changes: it is multiplied by $-A^{-3}$ under $M^+_1$
  and  is multiplied by $-A^{3}$ under $M^-_1$.  Nevertheless, one can extract isotopy invariants of links from their bracket polynomials. One such invariant is the span of the bracket polynomial discussed  in the next subsection. Another isotopy invariant is obtained by  combining the bracket polynomial with the writhe.  Namely, given a link~$L$ in~$X$, we define its \emph{normalized bracket polynomial}   
  $ (-A^3)^{-\wri(L)} \langle L \rangle $.
This polynomial is preserved under all basic moves  and is an  isotopy invariant of~$L$. A different normalization of the bracket polynomial can be defined for oriented links: use $\Wri$ instead of $\wri$ in the formula above.


  For local links  our bracket polynomial coincides with Kauffman's bracket polynomial. It therefore can be viewed as a  generalization of the latter to our setting. 
   Substituting $A=t^{-1/4}$ in 
the  normalized bracket polynomial of oriented links  in~$X$ we get a function in~$t$  generalizing the Jones polynomial of  oriented links in $\Bbb R^3$.

   \subsection{An estimate}\label{sbnnmsgfg} The bracket polynomial of classical links can be used to  estimate the number of  crossings in link diagrams. We obtain here a similar result for links in~$X$. We first recall the span of a  polynomial $f \in \Bbb Z[A, A^{-1}]$. If $f\neq 0$, then we 
 expand~$f$  as a linear combination of monomials $  A^m, A^{m+1},  ... ,A^{n}$ for certain integers $m\leq n$ such that both monomials $A^m, A^n$ appear in~$f$ with non-zero coefficients  (possibly, $m=n$). We call~$m$ the \emph{low degree}  and~$n$ the \emph{top degree} of~$f$. The integer $\spun(f)=n-m$ is  the \emph{span} of~$f$. For $f=0$,  set $\spun(f)=0$. 
      
      
      We  say that two links in~$X$ are \emph{disjoint} if the underlying loops of these links do not  meet.  We say that a link in~$X$ is \emph{unsplittable} if it is not a union of  non-empty disjoint links.  For example, all knots in~$X$ are unsplittable. It is clear that every link~$L$ in~$X$ splits uniquely  as a union of several disjoint unsplittable   links. These links are called \emph{split components} of~$L$ and their number is denoted by $sc(L)$.  Note that, in general, the number $sc(L)$ is \emph{not} an isotopy invariant of~$L$.

\begin{theor}\label{1aeee+m+} For any link~$L$ in a 2-complex~$X$, we have $$\cro(L)\geq 1-sc(L) +\spun(\langle L \rangle)/4.$$ In particular, for any unsplittable link~$L$ in~$X$, we have $\cro(L)\geq \spun(\langle L \rangle)/4$. \end{theor}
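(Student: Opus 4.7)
The plan is to combine a state-sum degree bound with an Euler-characteristic argument. Write $s_+(L) = |L, \#L|$ and $s_-(L) = |L, \emptyset|$ for the numbers of components of the two canonical smoothings of $L$ (all $A$-type and all $B$-type, respectively). I will first prove that
\begin{equation*}
\spun(\langle L \rangle) \leq 2 s_+(L) + 2 s_-(L) + 2 \cro(L) - 4,
\end{equation*}
and then that
\begin{equation*}
s_+(L) + s_-(L) \leq \cro(L) + 2 \, sc(L).
\end{equation*}
Substituting the second into the first and dividing by $4$ yields the theorem.

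For the first inequality, I would analyze the top and bottom degrees $T(C)$ and $t(C)$ of the Laurent polynomial $\langle L, C\rangle$ from \eqref{idyvgm3n+}: namely $T(C) = 2(|L,C|-1) + 2 \card(C) - \cro(L)$ and $t(C) = -2(|L,C|-1) + 2 \card(C) - \cro(L)$. The key observation is that smoothing any single crossing either splits one component into two or merges two components into one, so $|L, C \cup \{c\}| - |L,C| \in \{+1, -1\}$ whenever $c \in \#L \setminus C$. Combined with the unit increase of $\card(C)$, this forces both $T(C \cup \{c\}) - T(C)$ and $t(C \cup \{c\}) - t(C)$ to lie in $\{0, +4\}$. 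Successively adding crossings along a path from $\emptyset$ to $\#L$ that passes through $C$ then gives $T(C) \leq T(\#L) = 2 s_+(L) + \cro(L) - 2$ and $t(C) \geq t(\emptyset) = -2 s_-(L) - \cro(L) + 2$ for every state $C$. Since the span of a sum of Laurent polynomials is at most the largest top degree minus the smallest bottom degree occurring among its summands, the first inequality follows.

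For the second inequality, I would build an abstract closed surface $\Sigma$ out of $L$ by attaching $s_+(L)$ disks along the $A$-smoothed circles of $L$, attaching $s_-(L)$ disks along the $B$-smoothed circles of $L$, and inserting, at each crossing $c$ of $L$, a square whose four boundary arcs alternate between being glued to two $A$-smoothed and two $B$-smoothed arcs according to the cyclic order of arcs at $c$ induced by the local surface structure of $\Int(X)$ near $c$. The result $\Sigma$ is a closed 2-manifold in which the $4$-valent graph underlying $L$ embeds as the 1-skeleton of a CW structure with $\cro(L)$ vertices, $2 \cro(L)$ edges, and $s_+(L) + s_-(L)$ disk faces, so $\chi(\Sigma) = s_+(L) + s_-(L) - \cro(L)$. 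The connected components of $\Sigma$ are in bijection with the split components of $L$, since two 2-cells of $\Sigma$ lie in the same component iff the corresponding smoothed circles can be joined by a chain of 2-cells successively sharing an arc of $L$. Because every connected closed 2-manifold has Euler characteristic at most $2$, we conclude $\chi(\Sigma) \leq 2 \, sc(L)$, giving the second inequality.

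The main technical step is verifying that $\Sigma$ really is a closed 2-manifold. Near each crossing this reduces to checking that the four boundary arcs of the saddle attach consistently to the two $A$-disks and two $B$-disks meeting there, which rests on the fact that crossings lie in $\Int(X)$ and so carry a well-defined local surface structure and cyclic order of incident arcs. Away from the crossings the construction depends only on the abstract 4-valent combinatorial data of $L$ and the two smoothing pairings, so the ridge and singular points of $X$ play no role.
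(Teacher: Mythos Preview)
Your proof is correct and follows the same two-step strategy as the paper: a state-sum bound on the top and bottom degrees of $\langle L\rangle$, followed by an Euler-characteristic inequality for $|L,\#L|+|L,\emptyset|$. Your degree argument is essentially identical to the paper's (both use that changing one smoothing alters the circle count by exactly $\pm 1$).

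The one genuine difference is in how you establish the inequality $s_+(L)+s_-(L)\leq \cro(L)+2\,sc(L)$. The paper thickens the underlying $4$-valent graph~$\Gamma$ to a ribbon surface~$\Sigma$ with boundary (inserting half-twists in the bands so that the boundary circles separate into the $A$- and $B$-state circles), and then bounds $b_0(\partial\Sigma)$ via the exact sequence of the pair $(\Sigma,\partial\Sigma)$ together with Poincar\'e duality over $\ZZ/2\ZZ$. You instead cap that same ribbon surface off with one disk per $A$-circle and one per $B$-circle to obtain a \emph{closed} surface with $\chi=s_+ + s_- - \cro(L)$, and then invoke only the elementary fact that a connected closed $2$-manifold has $\chi\le 2$. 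Your route is more self-contained---no duality or long exact sequence is needed---while the paper's route stays closer to the argument in~\cite{Tu}. The point you flag at the end (that the link of each crossing in your CW complex is a single $4$-cycle because the $A$- and $B$-pairings of the four local arcs interleave in the cyclic order coming from $\Int(X)$, and that nothing else depends on how edges cross the ridge) is exactly the verification needed, and it goes through.
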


\begin{proof}  Let $r=\cro(L)$ be the number of crossings of~$L$ and let $s=sc(L)$ be the number of split components of~$L$. We prove first that
\begin{equation}\label{dualst}  \vert L, \#L \vert +\vert L, \emptyset \vert \leq r+2s. \end{equation} We will use the same method as in  the proof of Lemma 1 in \cite{Tu}. Denote by~$\Gamma$ be the union of the underlying loops of~$L$. This is a 4-valent graph embedded in~$X$ and consisting of generic points of~$X$ and  a finite set of ridge points of~$X$. We thicken~$\Gamma$ to a surface which is formed by square neighborhoods of  the vertices of~$\Gamma$ (= crossings of~$L$) in $\Int(X)$ and by narrow bands connecting the sides of these squares  and obtained  by thickening the edges of~$\Gamma$. There are two subtleties. First, if an edge of~$\Gamma$ crosses a ridge point~$x$ then  the thickening of this edge near~$x$ proceeds  in the union of two branches of~$X$ at~$x$ containing a part of~$\Gamma$ near~$x$. Second,   if a  band connects the sides $a,b$ of  square neighborhoods of  vertices of~$\Gamma$, then the dotted vertex of~$a$ should lie on the same long side of this band as the dotted vertex of~$b$; otherwise, the band receives an additional half-twist to ensure this condition (in the latter  case the band does not lie in~$X$). This construction yields a surface $\Sigma\supset \Gamma$  such that the 1-manifold $\partial \Sigma$ has $\vert L, \#L \vert $ components  containing no dots and
$ \vert L, \emptyset \vert$  components whose union contains all dots of $L$.  Thus, $\partial \Sigma$ has $  \vert L, \#L \vert +\vert L, \emptyset \vert $ components. The surface~$\Sigma$ itself has $s=sc(L)$ components.

 Let $b_i$ denote the $i$-th Betti number of a space  with coefficients in $\Bbb Z/2\Bbb Z$.
We have  $b_i(\Sigma)=b_i(\Gamma)$ for all~$i$ as the graph~$\Gamma$ is a deformation retract of~$\Sigma$. Since~$\Gamma$ has $r=\cro(L)$ vertices and they all have degree 4, this graph has $2r$ edges. Thus, $$\chi(\Sigma)=\chi(\Gamma)=r-2r=-r\quad {\rm{and}}
\quad b_1(\Sigma)=b_0(\Sigma)-\chi(\Sigma)=s+r.$$ The exact homology sequence of the pair $(\Sigma, \partial \Sigma)$ and the Poincar\'e duality yield
$$ \vert L, \#L \vert +\vert L, \emptyset \vert =b_0(\partial \Sigma)\leq b_0(\Sigma)+b_1(\Sigma, \partial \Sigma)$$
$$=b_0(\Sigma)+b^1(\Sigma)
=b_0(\Sigma)+b_1(\Sigma)=r+2s.$$
This gives \eqref{dualst}.

 Observe now that if a set $C\subset\#L$ is obtained by removing one element from~$\#L$, then the smoothings of~$L$ determined by the sets $\#L$ and~$C$ differ only at one crossing of~$L$. Consequently, $\vert L, C \vert\leq \vert L, \#L \vert+1$.  
Proceeding by induction, we  obtain  that if a set~$C$  is obtained by removing $k\geq 1$ elements  from the set of~$r$ elements~$\#L$, then $\vert L, C \vert\leq \vert L, \#L \vert+k$ and $\card (C)=r-k$. The top degree of the polynomial $\langle L, C \rangle$ is equal to
$$2(\vert L, C \vert -1+ { \card}\, C)-   r=2\vert L, C \vert -2-2k+   r$$ and  is bounded from above by  $ 2\vert L, \#L \vert -2+  r$. The top degree of the polynomial
$\langle L \rangle=\sum_C \langle L, C \rangle$ is bounded from above by  the same number.  

Similarly, if a set  $C\subset \# L$ has~$l$ elements then $\vert L,  C \vert \leq  \vert L,  \emptyset \vert+l$. Therefore the low degree $2-2 \vert L,  C \vert+2l-r$ of the polynomial
$\langle L, C \rangle$  is greater than or equal to  $2-2 \vert L,  \emptyset \vert-r$.  The low degree of the polynomial
$\langle L \rangle=\sum_C \langle L, C \rangle$ is bounded from below by
the same number. We conclude that 
$$\spun(\langle L \rangle)\leq (2\vert L, \#L \vert -2 +   r)-( 2-2 \vert L,\emptyset \vert-r)$$
$$=2\vert L, \#L \vert+2 \vert L,\emptyset \vert +2 r-4.$$
Combining with \eqref{dualst}, we get $\spun(\langle L \rangle)\leq 4r +4s-4$. This is equivalent to the claim of the theorem.
\end{proof}

      \subsection{Homotopy bracket polynomial}\label{s2nnnfcvgmsgfg} In analogy  with Section~\ref{s2h+}, we   define a  homotopy bracket polynomial of links in~$X$. By a \emph{simple system of curves} we shall mean a finite (possibly empty) family of disjoint simple closed curves in~$X$. We will make no difference between such a family and the union of  curves in this family. Two simple systems of curves are \emph{homotopic} if one can be deformed into the other in the class of simple systems of curves. Let $\mathcal X$ be the set of  homotopy classes of simple systems of curves in~$X$. Consider the ring $R=\Bbb Z[A, A^{-1}]$  and let $R\mathcal X$ be the free $R$-module with basis~$\mathcal X$. 
      We   denote by $\overline{R\mathcal X}$ the quotient of the module $R\mathcal X$ by the submodule generated
by the elements of type $s'+(A^2+A^{-2})s$ where $s, s'$ are simple systems of curves in~$X$ such that~$s'$ is obtained from~$s$ by adding a simple closed curve in $X\setminus s$ which is contractible in $X\setminus s$.

      For a link~$L$ in~$X$ and a set $C\subset \# L$, we smooth~$L$ at
all crossing  as  in Section~\ref{s2nnnnmsgfg}. This gives  a simple system of   curves, $s_C$, in~$X$.  The \emph{homotopy bracket}  of~$L$ is the sum 
\begin{equation}\label{idyvgm3n+WW} \langle\langle L \rangle \rangle=     \sum_{C\subset \#L}  A^{2\,{ \card}\, (C)-   \cro (L)} s_C \in \overline{R\mathcal X}
\end{equation}
where summation  runs over all sets $C\subset \#L$.
It is easy  to check that $\langle\langle L \rangle \rangle$ is preserved under
  the   (unoriented) basic moves $M_2 - M_4, M_5^{\pm}, M_6, M_7$  on~$L$.
  Under
  the  move $ M^{\pm}_1$  the homotopy bracket is multiplied by $-A^{\mp 3}$. The  \emph{normalized homotopy bracket}   
  $ (-A^3)^{-\wri(L)}\langle \langle L\rangle \rangle  \in \overline{R\mathcal X} $ is preserved under all basic moves  and is an  isotopy invariant of~$L$

\end{document}